\newtheoremstyle{lemma}{\topsep}{\topsep}%
     {}
     {}
     {\bfseries}
     {}
     {0.1em}
     {\thmname{#1}\thmnumber{ #2}\thmnote{ #3}}
\theoremstyle{lemma}  
\newtheorem{theorem}{Theorem}[section]    
\newtheorem{lemma}[theorem]{Lemma}
\newtheorem{corollary}[theorem]{Corollary}
\numberwithin{equation}{section}
\title
{Continuous anti-forcing spectra of cata-condensed hexagonal systems\thanks{Supported by NSFC (grant no. 11371180).} }
\author{Kai Deng$^{a,b}$, { Heping Zhang$^{a,}$\thanks{
Corresponding author. \newline
\emph{E-mail address}: zhanghp@lzu.edu.cn, dengkai04@126.com}}\\
{\footnotesize $^{a}$School of Mathematics and Statistics, Lanzhou University, Lanzhou, Gansu 730000, P. R.~China}\\
{\footnotesize $^{b}$School of Mathematics and Information Science, Beifang University of Nationalities,}\\{\footnotesize Yinchuan, Ningxia 750027, P. R.~China}}
\date{}
\begin{document}

\maketitle
\begin{abstract}
The anti-forcing number of a perfect matching $M$ of a graph $G$
is the minimal number of edges not in $M$ whose removal make $M$ as a unique perfect matching of the resulting graph.
The anti-forcing spectrum of $G$ is the set of
anti-forcing numbers of all perfect matchings of $G$.
In this paper we prove that the anti-forcing spectrum of any cata-condensed hexagonal system is continuous,
that is, it is an integer interval.

\vskip 0.1 in

\noindent \textbf{Key words:} Perfect matching;
Anti-forcing number; Anti-forcing spectrum; Hexagonal system

\end{abstract}

\section{Introduction}
Let $G$ be a simple graph with vertex set $V(G)$ and edge set $E(G)$. A {\em perfect matching} or 1-factor
of $G$ is a set of disjoint edges which covers all vertices of $G$.
Harary et al. \cite{Harary} proposed the \emph{forcing number} of a perfect matching $M$ of a graph $G$.
The roots of this concept can be found in an earlier paper by Klein and Randi\'{c} \cite{Klein}.
There, the forcing number has been called the \emph{innate degree of freedom} of a  Kekul\'{e} structure.
The forcing number of a perfect matching $M$ of a graph $G$ is equal to the smallest cardinality of
some subset $S$ of $M$ such that $M$ is completely determined by this subset (i.e., $S$ is not contained in other perfect matchings of $G$).
The {\em minimum} (resp. {\em maximum}) \emph{forcing number} of $G$ is the minimum (resp. maximum) value over forcing numbers
of all perfect matchings of $G$.
The set of forcing numbers
of all perfect matchings of $G$ is called the \emph{forcing spectrum} of $G$ \cite{Adams}.
The sum of forcing numbers of all perfect matchings of $G$ is called
the \emph{degree of freedom} of $G$,
which is relative to Clar's resonance-theoretic ideals \cite{Clar}.
For more results on the matching forcing  problem,
we refer the reader to 
\cite{c60, c70, c72, Afshani, Jiang1, Jiang, Ye, ZhangD, Lam, Pachter, Wang, Xu, Matthew}.

In 2007, Vuki\v{c}evi\'{c} and Trinajsti\'{c} \cite{VT1} introduced the \emph{anti-forcing number}
that is opposite to the forcing number.
The anti-forcing number of a graph $G$ is the smallest number of edges
whose removal result in a subgraph of $G$ with a unique perfect matching.
After this initial report, several papers appeared on this topic \cite{VT2, Deng1, Deng2, ZhangQQ, Che}.

Recently, Lei, Yeh and Zhang  \cite{Zh2} define the \emph{anti-forcing number of a perfect matching} $M$ of a graph $G$
as the minimal number of edges not in $M$ whose removal to make $M$ as a single perfect matching of the resulting graph, denoted by $af(G,M)$.
By this definition,  the anti-forcing number of a graph $G$ is the smallest anti-forcing number over all perfect matchings
of $G$. Hence the anti-forcing number of $G$ is the \emph{minimum anti-forcing number} of $G$, denoted by $af(G)$.
Naturally, the \emph{maximum anti-forcing number} of $G$ is defined as the largest anti-forcing number over all perfect matchings
of $G$, denoted by $Af(G)$.
They \cite{Zh2} also  defined the \emph{anti-forcing spectrum} of $G$ as the set of anti-forcing numbers of all perfect matchings of $G$, and denoted by Spec$_{af}(G)$.
If Spec$_{af}(G)$ is an integer interval,
then the anti-forcing spectrum of $G$ is called to be \emph{continuous}.

Let $M$ be a perfect matching of a graph $G$.
A cycle $C$ of $G$ is called an \emph{$M$-alternating cycle} if  the edges of $C$ appear alternately in $M$ and $E(G)\backslash M$.
If $C$ is an $M$-alternating cycle of $G$, then the symmetric difference $M\triangle C:=(M-C)\cup(C-M)$ is another perfect matching of $G$.

A set $\mathcal{A}$ of $M$-alternating cycles of a graph $G$ is called a \emph{compatible $M$-alternating set} if any two members of $\mathcal{A}$
either are disjoint or intersect only at edges in $M$.
Let $c^{\prime}(M)$ denote the cardinality of a maximum compatible $M$-alternating set
of $G$. For a planar bipartite graph $G$ with a perfect matching $M$,
the following minimax theorem reveals the relationship between
$af(G,M)$ and $c^{\prime}(M)$.

\begin{theorem}\label{compatibleset}\cite{Zh2}{\bf .}
Let $G$ be a planar bipartite graph with a perfect matching $M$. Then $af(G,M)=c^{\prime}(M)$.
\end{theorem}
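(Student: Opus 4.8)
The plan is to prove the two inequalities $af(G,M)\ge c'(M)$ and $af(G,M)\le c'(M)$ separately. The starting observation is that a graph carrying the perfect matching $M$ has $M$ as its \emph{unique} perfect matching if and only if it has no $M$-alternating cycle, because the symmetric difference of two distinct perfect matchings is a nonempty disjoint union of $M$-alternating cycles. Since an anti-forcing set deletes only edges outside $M$, after deleting a set $S\subseteq E(G)\setminus M$ the matching $M$ is still perfect and is now unique exactly when $S$ meets a non-$M$ edge of every $M$-alternating cycle. Hence $af(G,M)$ equals the minimum size of a set of non-$M$ edges that is a transversal of the $M$-alternating cycles, and this reformulation drives both directions.

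For the lower bound $af(G,M)\ge c'(M)$, let $\mathcal{A}=\{C_1,\dots,C_k\}$ be a maximum compatible $M$-alternating set, so $k=c'(M)$. By compatibility any two of the $C_i$ meet only in edges of $M$, and consequently the non-$M$ edge sets $C_i\setminus M$ are pairwise disjoint. Any anti-forcing set $S$ must contain a non-$M$ edge of each $C_i$, and since the $C_i\setminus M$ are disjoint these choices are distinct, forcing $|S|\ge k$. This is the easy direction and needs neither planarity nor bipartiteness.

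The reverse inequality $af(G,M)\le c'(M)$ is the substantive one, and it is where both hypotheses on $G$ enter. Using the bipartition $(X,Y)$ I would orient $G$ by directing every $M$-edge from $Y$ to $X$ and every non-$M$ edge from $X$ to $Y$; because the vertices alternate between $X$ and $Y$ along any cycle, the $M$-alternating cycles are precisely the directed cycles of the resulting plane digraph $D$. Contracting the $M$-arcs gives a plane digraph $D/M$ whose vertices are the edges of $M$ and whose arcs are the non-$M$ edges; one checks that directed cycles of $D/M$ correspond bijectively to $M$-alternating cycles of $G$, that deleting non-$M$ edges to kill all alternating cycles is the same as choosing a feedback arc set of $D/M$, and that the compatibility relation translates exactly into arc-disjointness. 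Under this dictionary the claim becomes the statement that in the plane digraph $D/M$ the minimum size of a feedback arc set equals the maximum number of arc-disjoint directed cycles.

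This planar min-max is the crux, and I would obtain it from planar duality together with the Lucchesi--Younger theorem: passing to the plane dual of $D/M$, the directed cycles of $D/M$ become directed cuts, so a maximum family of arc-disjoint directed cycles of $D/M$ becomes a maximum family of arc-disjoint dicuts in the dual, while a feedback arc set of $D/M$ becomes a dijoin of the dual; Lucchesi--Younger equates the minimum size of a dijoin with the maximum number of arc-disjoint dicuts, which yields the desired equality and hence $af(G,M)=c'(M)$. The main obstacles I anticipate are, first, verifying the dictionary rigorously --- in particular that directed cycles of $D/M$ lift back to \emph{simple} $M$-alternating cycles of $G$ and that the compatibility/arc-disjointness correspondence is exact --- and, second, setting up the planar duality carefully (choice of outer face, behaviour under the contraction) so that Lucchesi--Younger can be invoked. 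It is precisely here that planarity is indispensable, the bipartiteness having already been spent in making the orientation turn alternating cycles into directed cycles.
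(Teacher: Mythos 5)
Note that the paper itself gives no proof of this statement: it is quoted as a known result from \cite{Zh2} (Lei, Yeh and Zhang). Your proposal is correct in outline and follows essentially the same route as the proof in that reference --- the disjoint-transversal lower bound, then orienting the bipartite graph so that $M$-alternating cycles become directed cycles, contracting the $M$-arcs, and settling the resulting planar ``minimum feedback arc set equals maximum number of arc-disjoint directed cycles'' min-max by planar duality and the Lucchesi--Younger theorem.
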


A \emph{hexagonal system} (or \emph{benzenoid system})\cite{Cyvin} is a finite 2-connected
planar bipartite graph in which each interior face is surrounded by a regular hexagon of side length one.
Hexagonal systems are of great important for theoretical chemistry since they are the molecular graphs
of benzenoid hydrocarbons.

Let $H$ be a hexagonal system with a perfect matching $M$.
A set of $M$-alternating hexagons (the intersection is allowed) of $H$ is called an \emph{$M$-alternating set}.
A \emph{Fries set} of $H$ is a maximum alternating set over all perfect matchings of $H$.
The size of a Fries set of $H$ is called the \emph{Fries number} of $H$ and denoted by $Fries(H)$ \cite{Fries}.
It is obvious that an $M$-alternating set of $H$ is also a compatible $M$-alternating set.
By Theorem \ref{compatibleset}, $Af(H)\geq Fries(H)$.
The following theorem implies that the equality holds.

\begin{theorem}\label{maxantiforcing}\cite{Zh2}{\bf .}
Let $H$ be a hexagonal system with a perfect matching.
Then $Af(H)=Fries(H)$.
\end{theorem}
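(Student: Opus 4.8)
The plan is to prove the nontrivial inequality $Af(H)\le Fries(H)$; combined with the inequality $Af(H)\ge Fries(H)$ noted just before the statement, this yields equality. Since a hexagonal system is planar and bipartite, Theorem~\ref{compatibleset} applies to every perfect matching, so $Af(H)=\max_{M} af(H,M)=\max_{M} c'(M)$, the maximum being taken over all perfect matchings $M$ of $H$. Fix a perfect matching $M$ and a maximum compatible $M$-alternating set $\mathcal{A}=\{C_1,\dots,C_t\}$ realizing $t=c'(M)=Af(H)$. It then suffices to produce a perfect matching together with an ordinary $M$-alternating set of hexagons of size at least $t$, since $Fries(H)$ dominates the size of any alternating set of hexagons.

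Among all pairs $(M,\mathcal{A})$ with $M$ a perfect matching and $\mathcal{A}$ a maximum compatible $M$-alternating set of size $Af(H)$, I would choose one minimizing the total enclosed area $\sum_{i} n(C_i)$, where $n(C)$ is the number of hexagons lying inside the cycle $C$ (a nonnegative integer, so such a minimizer exists). The claim is that for this extremal pair every $C_i$ is a single hexagon; this finishes the proof, because then $\mathcal{A}$ is itself an $M$-alternating set of $Af(H)$ hexagons and hence $Fries(H)\ge Af(H)$. The core is therefore a \emph{reduction step}: if some $C_i$ is not a hexagon, replace it by a hexagon without increasing $|\mathcal{A}|$ while strictly decreasing the enclosed area, contradicting minimality.

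To set up the reduction, first note that the vertices on or inside $C:=C_i$ induce a sub-hexagonal-system $H_C$, and that $M$ restricted to $H_C$ is a perfect matching $M_C$ of $H_C$: because $C$ is $M$-alternating, every vertex of $C$ is matched by an edge of $C$, so no matching edge crosses $C$. The key fact I would establish is that $H_C$ contains an $M_C$-alternating hexagon $h$; since being alternating is a condition on the six edges of $h$ alone, such an $h$ is also $M$-alternating in $H$. I would prove this existence by an extremal ``peak'' argument: choosing a boundary hexagon of $H_C$ that is topmost and then rightmost, the pattern of $M$ forced on its boundary edges (exactly as in the terminal-hexagon computation for a linear chain, where the outer vertices must be matched internally) makes that hexagon alternating. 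Replacing $C$ by such an $h$ leaves $|\mathcal{A}|$ unchanged, and since $C$ encloses at least two hexagons while $h$ is a single hexagon inside the closed region bounded by $C$, the enclosed area strictly drops.

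The main obstacle is verifying that the replacement preserves compatibility, i.e.\ that $h$ meets every other member $C'$ of $\mathcal{A}$ only in $M$-edges. When $C'$ lies outside the closed region bounded by $C$, any edge shared with $h$ must lie on $C$ and is therefore a shared edge of the compatible pair $C,C'$, hence in $M$. The delicate case is a cycle $C'$ \emph{nested} inside $C$: then $h$ and $C'$ both lie in the region of $C$, and compatibility is no longer automatic. I would handle this by applying the reduction to an \emph{innermost} non-hexagonal cycle $C$ (one with no member of $\mathcal{A}$ in its open interior) and by selecting $h$ in the interior of $C$ whenever possible, so that $h$ is disjoint from all other members; the residual configurations, where the forced alternating hexagon must touch the boundary $C$, are exactly where the careful case analysis lies. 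Once compatibility is secured, the extremal argument yields the contradiction, every $C_i$ is a hexagon, and the theorem follows.
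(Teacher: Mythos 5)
First, note that the paper itself offers no proof of this statement: it is quoted from \cite{Zh2}, so there is nothing internal to compare against. Judged on its own terms, your overall strategy --- pass to a maximum compatible $M$-alternating set realizing $Af(H)$, minimize the total number of enclosed hexagons over all such pairs $(M,\mathcal{A})$, and argue that in the extremal configuration every member is a hexagon, whence $Fries(H)\ge Af(H)$ --- is the right one and is in the spirit of the argument in \cite{Zh2} and of Lemma \ref{compatible} of this paper. But two steps are genuinely unproved, and one of them is justified by a false claim.

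The first gap is the existence statement: that the closed interior of a non-hexagonal $M$-alternating cycle $C$ contains an $M$-alternating hexagon. This is true, but it is a nontrivial theorem (it is exactly what this paper cites as \cite{Guo, Zhang} inside the proof of Lemma \ref{compatible}); your ``topmost-then-rightmost peak'' justification is incorrect. Counterexample: in the linear chain with two hexagons there is a perfect matching $M$ for which the perimeter is $M$-alternating, the shared edge is not in $M$, and only the \emph{left} hexagon is $M$-alternating --- the right (topmost-rightmost, terminal) hexagon then reads $\bar M,M,\bar M,M,\bar M,\bar M$ around its boundary. By mirror symmetry no fixed geometric selection rule can work, so this step needs either a citation or an actual induction. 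The second and more serious gap is the replacement step itself, which you explicitly defer (``the residual configurations \dots are exactly where the careful case analysis lies''). Beyond the nested-cycle compatibility issue you flag, there is a case you do not mention at all: the $M$-alternating hexagon $h$ found inside $C$ may already belong to $\mathcal{A}$ (it can be disjoint from $C$ and from everything else, hence compatible with the whole set). In that event $(\mathcal{A}\setminus\{C\})\cup\{h\}=\mathcal{A}\setminus\{C\}$ has cardinality $|\mathcal{A}|-1$, the extremal argument collapses, and one must either locate a different alternating hexagon or modify $M$ as well. Since the entire content of the theorem beyond the easy inequality $Af(H)\ge Fries(H)$ lives in precisely these deferred cases, the proposal as written is a plausible plan rather than a proof.
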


In this paper we consider the anti-forcing spectra of
cata-condensed hexagonal systems.
In the next section,
we introduce some graph-theoretic terms relevant to our subject and
give some useful lemmas.
In Section 3,
we prove that the anti-forcing spectrum of any cata-condensed hexagonal system is continuous.
It is quite different from the case for forcing spectrum.
In fact,
the forcing spectra of some cata-condensed hexagonal systems
have gaps (see \cite{Zh2, ZhangD}).

\section{Preliminaries and lemmas}

The \emph{inner dual graph} $H^{*}$ of a hexagonal system $H$ is a graph whose
vertices correspond to hexagons of $H$, and two such vertices are adjacent by an edge of $H^*$ if and only if
they correspond to two adjacent hexagons (i.e., these two hexagons have one common edge).
Then $H$ is \emph{cata-condensed} if and only if $H^{*}$ is a tree \cite{Balaban}.

We can see that edges of a cata-condensed hexagonal system $H$ can be classified into
\emph{boundary} edges (edges are on the boundary of $H$) and \emph{shared} edges (edges are shared by two hexagons of $H$),
and all vertices of $H$ are on the boundary (i.e., $H$ has no inner vertices).
A subgraph $G^\prime$ of a graph $G$ is \emph{nice}
if $G-V(G^\prime)$ (the graph obtained by deleting vertices of $V(G^\prime)$ and their incident edges from $G$) has a perfect matching.
It is well known that every cata-condensed hexagonal system $H$ has perfect matchings and
every cycle in it is nice \cite{Cyvin}, so each hexagon of $H$ can be $M$-alternating with respect to some perfect matching $M$ of $H$.

\begin{figure*}[http]
\hfill\begin{minipage}[t]{0.4\textwidth}
\centering
\includegraphics[height=40mm]{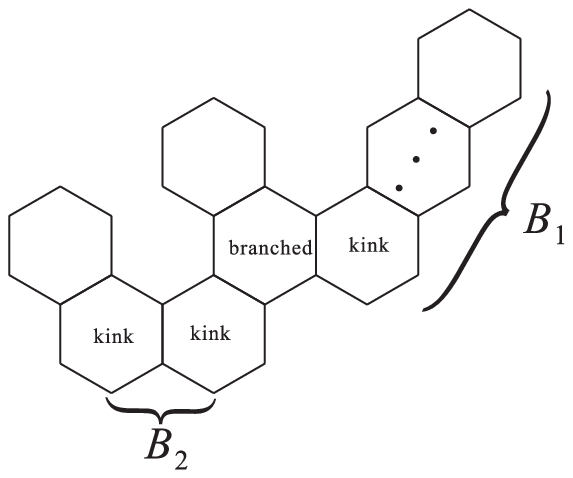}
\caption{A cata-condensed hexagonal system with one branched hexagon and
  three kinks.}
\label{kink}
\end{minipage}%
\hfill
  \begin{minipage}[t]{0.3\textwidth}
\centering
\includegraphics[height=10mm]{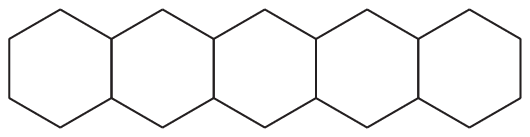}
\caption{The linear chain with five hexagons.}
\label{linear}
\end{minipage}%
\hspace*{\fill}
\end{figure*}

A hexagon $s$ of a cata-condensed hexagonal system $H$ has one, two, or three neighboring hexagons.
$s$ is called \emph{terminal} if it has one neighboring hexagon,
and \emph{branched} if it has three neighboring hexagons.
$s$ has exactly two neighboring hexagons is a \emph{kink} if $s$ possesses two adjacent vertices of degree 2,
is \emph{linear} otherwise.
An illustration is given in Fig. \ref{kink}.
A cata-condensed hexagonal system with no branched hexagons is called a \emph{hexagonal chain}.
A hexagonal chain with no kinks is called a \emph{linear chain},
an example is shown in Fig. \ref{linear}.

A linear chain $B$ contained in a cata-condensed hexagonal system $H$ is called \emph{maximal}
if $B$ is not contained in other linear chains of $H$.
For example, see Fig. \ref{kink}, $B_1$ and $B_2$ are two maximal linear chains.

Let $B$ be a maximal linear chain of a cata-condensed hexagonal system $H$.
We draw a straight line $l$ passing through the two centers of the two terminal hexagons of $B$.
Let $E$ be the set of those edges which intersecting $l$.
By the Lemma 2.1 in \cite{Zh3}, the following lemma is immediate.

\begin{lemma}\label{unique1}{\bf .}
Let $M$ be any perfect matching of $H$.
Then $|M\cap E|=1$.
\end{lemma}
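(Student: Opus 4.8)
The plan is to prove the stronger and cleaner statement $|M\cap E|=1$ for every perfect matching $M$, by first identifying $E$ with a very special edge cut and then comparing colour classes. Fix the drawing so that $l$ is horizontal and the maximal linear chain is $B=s_1s_2\cdots s_n$, with consecutive hexagons sharing the mutually parallel edges $e_1,\dots,e_{n-1}$; write $e_0,e_n$ for the two remaining edges of $s_1,s_n$ met by $l$, so that $E=\{e_0,e_1,\dots,e_n\}$. The first step is to argue that, because $B$ is \emph{maximal} and $H^{*}$ is a tree, the line $l$ meets no hexagon of $H$ other than $s_1,\dots,s_n$: any hexagon attached at a kink or a branch of some $s_i$ hangs off a slanted edge and lies strictly above or strictly below $l$, and the tree structure of $H^{*}$ prevents it from ever returning to cross $l$. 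Letting $V^{+}$ (resp. $V^{-}$) denote the vertices lying above (resp. below) $l$, this shows that $E$ is exactly the set of edges joining $V^{+}$ to $V^{-}$; in particular $E$ is an edge cut. Establishing this geometric fact cleanly is, I expect, the main obstacle, since it is the one place where both the maximality of $B$ and the cata-condensed (tree) hypothesis are genuinely used.

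The second step is a two-colouring argument. Colour $V(H)$ black and white as a bipartite graph. Walking along the upper boundary path $u_0x_1u_1x_2\cdots x_nu_n$ of $B$, where $u_i$ is the upper end of $e_i$ and $x_i$ the apex of $s_i$, the colours alternate, so all the $u_i$ receive one colour, say black, and all the $x_i$ the other; thus every edge of $E$ has its $V^{+}$-endpoint black and its $V^{-}$-endpoint white. Consequently every \emph{white} vertex of $V^{+}$ has all of its neighbours in $V^{+}$ and hence must be matched inside $V^{+}$. Writing $b^{+},w^{+}$ for the numbers of black and white vertices in $V^{+}$, the edges of $M$ lying inside $V^{+}$ therefore cover all $w^{+}$ white vertices and exactly $w^{+}$ black vertices, so the remaining $b^{+}-w^{+}$ black vertices of $V^{+}$ are matched across the cut. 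This gives $|M\cap E|=b^{+}-w^{+}$, a quantity that does not depend on $M$.

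The final step is to evaluate $b^{+}-w^{+}=1$. For the bare linear chain the set $V^{+}=\{u_0,\dots,u_n\}\cup\{x_1,\dots,x_n\}$ consists of $n+1$ black and $n$ white vertices, so $b^{+}-w^{+}=1$. Every hexagon attached above $l$ meets the existing structure in a single edge (one black and one white vertex) and contributes four new vertices, namely two black and two white, and the same bookkeeping applies to the entire subtree of hexagons hanging above $l$; hence such attachments leave $b^{+}-w^{+}$ unchanged. Therefore $b^{+}-w^{+}=1$ and $|M\cap E|=1$, which is the assertion of the lemma. (Alternatively, once $E$ is recognised as this straight cut running from boundary to boundary, the statement follows immediately from Lemma~2.1 of \cite{Zh3}.)
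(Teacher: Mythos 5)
The paper does not actually prove this lemma---it is quoted as an immediate consequence of Lemma 2.1 of \cite{Zh3}---so your attempt at a self-contained proof is a genuinely different route, and your second and third steps are the right argument: the cut $E$ is ``monochromatically oriented'', so $|M\cap E|$ equals the colour excess of one side for \emph{every} perfect matching, and that excess is $1$ because each hexagon glued on along the tree $H^{*}$ shares exactly one edge with what is already there and hence contributes two new vertices of each colour. The difficulty is exactly where you suspected it: the first step. The claim that the \emph{infinite} line $l$ meets no hexagon of $H$ outside $B$ is false, and ``the tree structure of $H^{*}$ prevents it from ever returning to cross $l$'' is not an argument. For a concrete counterexample, let $B=s_1s_2$ be a horizontal two-hexagon chain, attach $t_1$ at the upper-right edge of $s_2$, then $t_2$ to the right of $t_1$, then $t_3$ at the lower-right edge of $t_2$. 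The inner dual is a path, so this five-hexagon system is cata-condensed, and $B$ is still a maximal linear chain; yet $t_3$ is centred on $l$ and both of its vertical edges cross $l$. With $E$ read literally as ``all edges intersecting $l$'' one even gets perfect matchings with $|M\cap E|=2$, so your identification of $E$ with the set of edges joining the vertices above $l$ to those below $l$ breaks down, and with it the equality $|M\cap E|=b^{+}-w^{+}$.

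The repair is to abandon the infinite line. By the maximality of $B$, the two extreme parallel edges $e_0$ and $e_n$ are boundary edges of $H$, so the closed segment of $l$ joining their midpoints runs through the interiors of the hexagons of $B$ only and has its endpoints on the boundary of the region of $H$; it crosses exactly the $n+1$ parallel edges of $B$, and by the Jordan curve theorem it splits the region of $H$ into two parts. Hence this restricted $E$ is an honest edge cut, and you should define $V^{+}$ and $V^{-}$ as the vertex sets of the two components of $H-E$ (each subtree of hexagons hanging off $B$ lies entirely in one of them), not as the vertices above and below the infinite line. With that substitution your colouring and counting arguments go through verbatim and give $|M\cap E|=b^{+}-w^{+}=1$.
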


Let $\mathcal{A}$ be a compatible $M$-alternating set with respect to a perfect matching $M$ of a planar bipartite graph $G$.
Two cycles $C_1$ and $C_2$ of $\mathcal{A}$ are \emph{crossing} if they share an edge $e$ in $M$ and the four edges adjacent to $e$
alternate in $C_1$ and $C_2$ (i.e., $C_1$ enters into $C_2$ from one side and leaves from the other side via $e$).
$\mathcal{A}$ \emph{is non-crossing}  if any two cycles in $\mathcal{A}$ are not crossing.
Lei, Yeh and Zhang  \cite{Zh2} proved that any compatible $M$-alternating set can be improved to be a non-crossing compatible $M$-alternating set with the same cardinality.
Let $H$ be a cata-condensed hexagonal system
with a perfect matching $M$.
For a cycle $C$ of $H$, let $h(C)$ denote the number of hexagons in the interior of $C$.
Then we can choose a maximum non-crossing compatible $M$-alternating set $\mathcal{A}$
such that $|\mathcal{A}|=af(H,M)$ and $h(\mathcal{A})=\sum_{C\in \mathcal{A}}h(C)$ ($h(\mathcal{A})$ is called \emph{$h$-index} of $\mathcal{A}$ \cite{Zh2}) is as small as possible.
By using those notations, we give the following lemma.

\begin{lemma}\label{compatible}{\bf .}
$\mathcal{A}$ contains all $M$-alternating hexagons of $H$, and
any two non-hexagon cycles of $\mathcal{A}$ have at most one common edge in $M$ and are interior disjoint (i.e., have no common areas).
\end{lemma}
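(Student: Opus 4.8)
The plan is to argue by contradiction, using the two extremal properties that single out $\mathcal{A}$: among all non-crossing compatible $M$-alternating sets it has maximum cardinality $af(H,M)=c'(M)$ (Theorem \ref{compatibleset}), and, subject to that, minimum $h$-index $h(\mathcal{A})$. Every forbidden configuration will be removed by an \emph{exchange}: I replace one or two offending cycles by $M$-alternating cycles obtained by rerouting them along their shared $M$-edges (informally, by passing to suitable symmetric differences), so that the cardinality of the set is preserved, the set stays compatible and non-crossing, and the total enclosed area strictly shrinks. Any such exchange contradicts the minimality of $h(\mathcal{A})$, while an outright addition contradicts its maximality. Throughout I use that $H$ has no interior vertices and that its inner dual $H^{*}$ is a tree, so that the hexagons enclosed by a cycle are the vertices of a subtree of $H^{*}$ and the counts $h(\cdot)$ behave additively under the exchanges; Lemma \ref{unique1} will control how the rerouted cycles cross maximal linear chains.

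For the first assertion, suppose some $M$-alternating hexagon $s$ is not in $\mathcal{A}$. If $s$ were compatible and non-crossing with every member of $\mathcal{A}$, then $\mathcal{A}\cup\{s\}$ would be a strictly larger non-crossing compatible $M$-alternating set, contradicting $|\mathcal{A}|=c'(M)$. Hence $s$ shares a non-$M$ edge with, or crosses, some $C\in\mathcal{A}$. The point is that a hexagon attains the least possible value of $h$, so $s$ is always the cheapest cycle to keep: distinguishing the cases according to whether $s$ lies in the interior of $C$, shares one $M$-edge with $C$, or crosses $C$, I would show that $C$ can be rerouted past $s$ into one or more $M$-alternating cycles, so that replacing $C$ by these cycles together with $s$ gives a set of the same cardinality that is still compatible and non-crossing with the remainder of $\mathcal{A}$ but has strictly smaller $h$-index. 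This contradiction forces $s\in\mathcal{A}$, so $\mathcal{A}$ contains every $M$-alternating hexagon.

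For the second assertion let $C_1,C_2\in\mathcal{A}$ be non-hexagon cycles; compatibility and the non-crossing property already guarantee that they meet only in $M$-edges and never cross. I first rule out a common interior. Two non-crossing simple closed curves cannot overlap partially, so an overlap would force one interior to contain the other, say $\mathrm{int}(C_1)\subset\mathrm{int}(C_2)$. Because $H$ is cata-condensed, no hexagon can be completely surrounded, so $C_1$ and $C_2$ cannot be edge-disjoint; they share an $M$-edge, and rerouting $C_2$ along it produces $M$-alternating cycles whose interiors avoid the doubly counted hexagons of $\mathrm{int}(C_1)$, strictly lowering $h(\mathcal{A})$, a contradiction. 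With the interiors disjoint I then rule out two or more shared $M$-edges: rerouting along two of the shared $M$-edges splits $C_1\cup C_2$ into $M$-alternating cycles each enclosing strictly fewer hexagons, again contradicting minimality (in the borderline cases the configuration would create an interior vertex or a crossing, both excluded by the cata-condensed structure and the non-crossing hypothesis). Hence $C_1$ and $C_2$ are interior disjoint and share at most one $M$-edge.

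The main obstacle is the local case analysis underpinning every exchange: after rerouting along a shared $M$-edge one must check that the resulting curves are genuinely $M$-alternating cycles (a naive symmetric difference can leave a vertex incident to two non-$M$ edges), that they remain pairwise compatible and non-crossing with all the untouched members of $\mathcal{A}$, and, most delicately, that the $h$-index strictly decreases rather than merely staying equal. Establishing the strict decrease is where the cata-condensed hypothesis does the real work---through the tree structure of $H^{*}$, the absence of interior vertices, and Lemma \ref{unique1}---since one must rule out the degenerate exchanges that reproduce a cycle already present or collapse to a shorter cycle without reducing the enclosed area, either of which would stall the extremal argument.
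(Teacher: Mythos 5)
Your overall framework---an exchange argument playing the maximality of $|\mathcal{A}|$ against the minimality of the $h$-index $h(\mathcal{A})$---is the same one the paper uses, but the proposal leaves the actual exchanges unconstructed, and those are precisely the content of the proof. You yourself flag ``the local case analysis underpinning every exchange'' as the main obstacle and do not resolve it, so as written this is a plan rather than a proof. Moreover, several of the steps you do commit to are problematic. In the first part, ``replacing $C$ by these cycles together with $s$'' cannot simultaneously preserve cardinality and involve one or more rerouted cycles: the resulting compatible set would have size exceeding $c'(M)$, which is the maximum over \emph{all} compatible $M$-alternating sets. The paper's exchange is simply $(\mathcal{A}\setminus\{C\})\cup\{s\}$, with no rerouting at all; what makes it work is a structural fact you do not establish, namely that in a cata-condensed system an $M$-alternating hexagon $s$ that fails to be compatible with some $C\in\mathcal{A}$ must lie in the interior of $C$ (otherwise $s$ would be adjacent to three hexagons enclosed by $C$, forcing a cycle in the tree $H^{*}$). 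Thus the case ``$s$ shares a non-$M$ edge with $C$ from outside,'' which you propose to handle by rerouting, is in fact vacuous, and the case analysis you defer is exactly where the cata-condensed hypothesis has to be cashed in.

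For the second assertion your route also misses the step that actually closes the argument. The claim ``no hexagon can be completely surrounded, so $C_1$ and $C_2$ cannot be edge-disjoint'' is not correct as stated (a non-hexagon $M$-alternating cycle in a cata-condensed system always encloses hexagons), and ``rerouting $C_2$ along a shared $M$-edge produces $M$-alternating cycles \ldots strictly lowering $h(\mathcal{A})$'' is again an unverified exchange. The paper needs no exchange here: if $\mathrm{int}(C_1)\subset\mathrm{int}(C_2)$, it takes an $M$-alternating hexagon $s$ in the interior of $C_1$ (which exists by the cited result on $M$-alternating cycles and belongs to $\mathcal{A}$ by the first part); compatibility with $s$ together with the absence of interior vertices forces both $C_1$ and $C_2$ to traverse all six edges leaving $s$, and these edges are not in $M$, contradicting the compatibility of $C_1$ and $C_2$ directly. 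Similarly, two common $M$-edges are excluded because they would create an interior vertex of $H$---a direct structural impossibility, not an $h$-index argument. The concrete gap, then, is that the rerouting constructions on which every step of your proof rests are never defined, and where you do make definite claims they either require the missing cata-condensed structural lemmas or are false as stated.
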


\begin{proof}

Let $s$ be any $M$-alternating hexagon of $H$.
Suppose $s\notin \mathcal{A}$.
By the maximality of $|\mathcal{A}|$,
$\mathcal{A}\cup\{s\}$ is not a compatible $M$-alternating set.
So there is a cycle $C\in \mathcal{A}$ which is not compatible with $s$.
Since any two $M$-alternating hexagons must be compatible, $C$ is not a hexagon of $H$.
$s$ is in the interior of $C$ since $H$ is cata-condensed.
We claim that $(\mathcal{A}\setminus \{C\})\cup\{s\}$ is a compatible $M$-alternating set. Otherwise there is a cycle $C^\prime\in \mathcal{A}\setminus\{C\}$ such that $C^\prime$ and $s$ are not compatible.
Hence $s$ is in the interior of $C^\prime$.
It implies that $C$ and $C^\prime$ are either not compatible
or crossing, a contradiction.
Therefore, $(\mathcal{A}\setminus \{C\})\cup\{s\}$ is a maximum non-crossing compatible $M$-alternating set with smaller $h$-index,
a contradiction. Hence $s\in \mathcal{A}$.

Let $C_1$ and $C_2$  be two non-hexagon cycles in $\mathcal{A}$.
First we prove that $C_1$ and $C_2$ are interior disjoint.
If not, without loss of generality,
we may suppose $C_1$ is contained in the interior of $C_2$
since $C_1$ and $C_2$ are not crossing.
Then there is an $M$-alternating hexagon $s$ in the interior of $C_1$ since $C_1$ is $M$-alternating \cite{Guo, Zhang}.
By the above discussion,
we have that $s\in \mathcal{A}$ and $s$ is compatible with both  $C_1$ and $C_2$.
Since $H$ is cata-condensed,
$C_1$ must pass through the vertices of $s$ and the three edges of $s$
in $M$, but not the other three edges of $s$.
Hence $C_1$ must pass through the six edges going out of $s$.
Similarly, $C_2$ must pass through the six edges going out of $s$.
It implies that $C_1$ and $C_2$ are not compatible, a contradiction.

Next, we prove that $C_1$ and $C_2$
have at most one common edge in $M$.
If $C_1$ and $C_2$ have at least two common edges in $M$,
then there must generate inner vertices in $H$,
a contradiction.
\end{proof}

\begin{lemma}\label{big}{\bf .}
Let $H$ be a cata-condensed hexagonal system with at least two hexagons.
Then $af(H)<Fries(H)$.
\end{lemma}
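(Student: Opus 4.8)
The plan is to exploit the inequality $af(H)\le Af(H)=Fries(H)$ coming from Theorem \ref{maxantiforcing}, so that it suffices to produce a single perfect matching $M$ with $c'(M)=af(H,M)<Fries(H)$; equivalently I argue by contradiction, assuming $af(H)=Fries(H)=:k$. First I would record the easy bound $k=Fries(H)\ge 2$: since $H$ has at least two hexagons it has two adjacent hexagons sharing an edge $e$, and putting $e\in M$ and rotating both so that each has three $M$-edges makes both $M$-alternating, which extends to a perfect matching of $H$ because every cycle of $H$ is nice; hence at least two hexagons are simultaneously alternating. Next, fix a Fries matching $M^{*}$ with its $k$ alternating hexagons $s_1,\dots ,s_k$. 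Because these $k$ hexagons already form a compatible $M^{*}$-alternating set and $c'(M^{*})\le Af(H)=k$, they constitute a maximum compatible set; by Lemma \ref{compatible} the minimum-$h$-index maximum compatible set $\mathcal{A}^{*}$ of $M^{*}$ is then exactly $\{s_1,\dots ,s_k\}$ and contains no non-hexagon cycle.

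The engine of the argument is rotation at a terminal hexagon. Let $t$ be a terminal hexagon of $H$ and $t'$ its unique neighbour, sharing the edge $e$. The four private vertices of $t$ have degree two, so the only edges of $H$ meeting them lie on $t$; consequently $t$ shares an edge with no hexagon other than $t'$. Hence passing from $M^{*}$ to $M':=M^{*}\triangle t$ flips only the six edges of $t$: the hexagon $t$ remains $M'$-alternating, the status of $t'$ can change only through the single flipped edge $e$, and no hexagon other than $t,t'$ is affected. In particular no new alternating hexagon is created. The one configuration I need is a Fries matching in which a terminal hexagon $t$ and its neighbour $t'$ are both alternating (so that $e\in M^{*}$); I would verify that such a Fries matching always exists by choosing $M^{*}$ suitably, using that along the terminal maximal linear chain through $t$ the unique crossing edge of Lemma \ref{unique1} can be placed on the shared edge $e$. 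Rotating $t$ then destroys the alternating hexagon $t'$ and creates none, so $f(M')=k-1$.

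To reach a contradiction I use the standing assumption $af(H)=k$, which forces $c'(M')\ge k$. Since $M'$ has only $k-1$ alternating hexagons, Lemma \ref{compatible} shows that any minimum-$h$-index maximum compatible set $\mathcal{A}'$ of $M'$ contains at least one non-hexagon cycle $C$, and that its non-hexagon cycles are interior-disjoint, each enclosing an alternating hexagon of $M'$. Now I compare with $M^{*}$: because $M'$ and $M^{*}$ differ only on the six edges of the terminal hexagon $t$, a large $M'$-alternating cycle $C$ that avoids $t$ is also $M^{*}$-alternating, and I would show it stays compatible with $s_1,\dots,s_k$; this yields a compatible $M^{*}$-alternating set of size $k+1$, contradicting $Af(H)=Fries(H)=k$. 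The remaining possibility, that every such large cycle must run through $t$, is handled by the local structure at the terminal hexagon: a cycle through the degree-two vertices of $t$ is pinned to the boundary of $t$ and can be rerouted across $e$ to produce the corresponding $M^{*}$-alternating cycle, again contradicting maximality.

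The main obstacle is precisely this last step: ensuring that destroying the leaf hexagon $t'$ cannot be silently compensated by a newly available large $M'$-alternating cycle that keeps $c'(M')$ at $k$. Controlling this requires the full force of the rigidity $Af(H)=Fries(H)$ together with the interior-disjointness and hexagon-enclosing properties of Lemma \ref{compatible}, and very likely an extremal choice of the Fries matching $M^{*}$ (for instance, among all Fries matchings one whose terminal configuration is as \emph{dimerized} as possible, so that no large alternating cycle can survive the rotation). I expect the bookkeeping of how alternating cycles are created and annihilated in the local neighbourhood of $t$, rather than any single global idea, to be the technically delicate part of the proof.
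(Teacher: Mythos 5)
Your opening reduction---it suffices to exhibit one perfect matching $M$ with $af(H,M)<Fries(H)$---is exactly right, but the route you take from there has two genuine gaps, and the second is fatal as written. First, your argument needs a \emph{Fries} matching $M^{*}$ in which a terminal hexagon $t$ and its neighbour $t'$ are both alternating with the shared edge $e\in M^{*}$. Lemma \ref{unique1} only tells you that \emph{some} perfect matching places the crossing edge on $e$; it does not let you impose this local condition on a matching that simultaneously achieves the global maximum $h^{\prime}(M)=Fries(H)$, and you give no argument that the two requirements are compatible. Second, and more seriously, the heart of your proof---showing that after the rotation $M'=M^{*}\triangle t$ the quantity $c^{\prime}(M')$ actually drops below $k$, i.e.\ that the lost alternating hexagon $t'$ cannot be compensated by a non-hexagon $M'$-alternating cycle---is precisely the step you leave open. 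The transfer you sketch (an $M'$-alternating cycle avoiding $t$ is also $M^{*}$-alternating, hence adjoin it to $s_1,\ldots,s_k$) fails exactly at $t'$: nothing guarantees the transferred cycle meets $t'$ only in $M^{*}$-edges, so you do not obtain a compatible $M^{*}$-alternating set of size $k+1$. Since you explicitly defer this ``bookkeeping,'' the proposal does not constitute a proof.

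The paper avoids both difficulties by running the argument in the opposite direction and never invoking a Fries matching. Take a terminal hexagon $s$ with neighbour $s'$ and shared edge $e$; since $s'$ is a nice cycle there is a perfect matching $M$ with $s'$ $M$-alternating and $e\in M$ (no maximality constraint, so this is immediate), and then $s$ is automatically $M$-alternating as well. Set $F=M\triangle s$ and let $\mathcal{A}$ be a maximum non-crossing compatible $F$-alternating set of smallest $h$-index, so $s\in\mathcal{A}$ by Lemma \ref{compatible}. One checks that no cycle of $\mathcal{A}$ uses the non-$M$ edges of $s'$, so $\mathcal{A}\cup\{s'\}$ is a compatible $M$-alternating set, giving $af(H)\le af(H,F)=|\mathcal{A}|<|\mathcal{A}|+1\le af(H,M)\le Af(H)=Fries(H)$ via Theorems \ref{compatibleset} and \ref{maxantiforcing}. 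Adding one hexagon to an already-maximum set for $F$ is a local verification; your plan instead requires proving that a maximum compatible set for $M'$ \emph{cannot} be large, which is a global non-existence statement---and that is exactly where your argument stalls.
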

\begin{proof}
By Theorem \ref{maxantiforcing}, $Af(H)=Fries(H)$.
It is sufficient to prove that $af(H)<Af(H)$.
Let $s$ be a terminal hexagon of $H$.
Then $s$ has a neighboring hexagon $s^\prime$
since $H$ has at least two hexagons.
Let $e$ be the sheared edge of $s$ and $s^\prime$.
Since $s^\prime$ is nice, there is a perfect matching $M$ of $H$ such that $s^\prime$ is $M$-alternating
and $e\in M$.
Note that $s$ is also $M$-alternating.
So $F=M\triangle s$ is a perfect matching of $H$ such that $s$ is $F$-alternating.
Let $\mathcal{A}$ be a maximum non-crossing compatible $F$-alternating set with smallest $h$-index.
By Lemma \ref{compatible}, we have that $s \in \mathcal{A}$.
We can see that  no cycle of $\mathcal{A}$ passing through
the three edges of $s^\prime$ not in $M$.
So $\mathcal{A}\cup \{s^\prime\}$ is a compatible $M$-alternating set.
By Theorem \ref{compatibleset},
we have that $af(H)\leq |\mathcal{A}|<|\mathcal{A}\cup \{s^\prime\}|\leq af(H,M)\leq Af(H)$.
\end{proof}

\section{Continuous anti-forcing spectra}

Let $a$ and $b$ be two integer numbers, and $a\leq b$.
In the following,
we use $[a,b]$ to denote the integer interval from $a$ to $b$.

\begin{figure}[http]
  \centering
   \includegraphics[height=50mm]{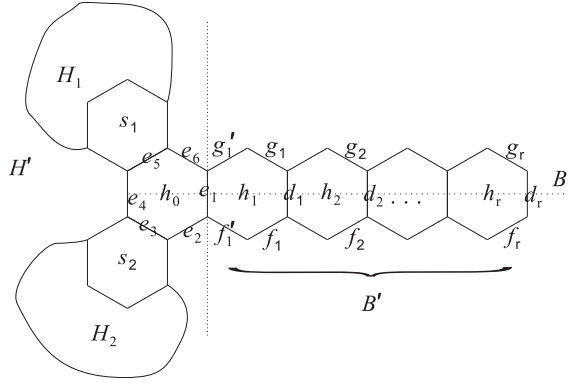}
      \caption{The illustration of the proof of Theorem \ref{continuous}.}
  \label{unique}
\end{figure}

\begin{theorem}\label{continuous}{\bf .}
Let $H$ be a cata-condensed hexagonal system. Then anti-forcing spectrum of $H$ is continuous.
\end{theorem}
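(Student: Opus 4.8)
The plan is to show that the spectrum equals the full integer interval $[af(H),Af(H)]$. By Theorem \ref{maxantiforcing} the maximum value $Af(H)=Fries(H)$ is attained by some perfect matching, and the minimum value $af(H)$ is attained by the definition of the minimum anti-forcing number, so the spectrum is contained in $[af(H),Af(H)]$ and meets both endpoints. It therefore suffices to rule out gaps. I would do this by exploiting the fact that the perfect matchings of a cata-condensed hexagonal system are linked by hexagon rotations: the $Z$-transformation (resonance) graph $Z(H)$, whose vertices are the perfect matchings of $H$ and whose edges join two matchings $M,M'$ with $M'=M\triangle s$ for some $M$-alternating hexagon $s$, is connected (a classical fact for benzenoid systems, available since every cycle of $H$ is nice). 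Regarding $af(H,\cdot)$ as an integer-valued function on the connected graph $Z(H)$, continuity of the spectrum reduces, by a discrete intermediate-value argument, to the single adjacency estimate below.

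The crux is the adjacency lemma: if $s$ is an $M$-alternating hexagon and $M'=M\triangle s$, then $|af(H,M)-af(H,M')|\le 1$. Using Theorem \ref{compatibleset} I translate this into the language of compatible sets, where $af(H,M)=c^{\prime}(M)$. Fix a maximum non-crossing compatible $M$-alternating set $\mathcal{A}$ of smallest $h$-index; by Lemma \ref{compatible} it contains every $M$-alternating hexagon, in particular $s\in\mathcal{A}$, and its non-hexagon members are interior disjoint and pairwise share at most one edge of $M$. The goal is to convert $\mathcal{A}$ into a compatible $M'$-alternating set of size at least $|\mathcal{A}|-1$, which yields $af(H,M')=c^{\prime}(M')\ge af(H,M)-1$; the reverse inequality then follows from the symmetric roles of $M$ and $M'$ under the same flip.

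For the conversion, the hexagon $s$ is again alternating after the flip, so I keep it. Any member of $\mathcal{A}$ that uses no edge of $s$ is untouched by the flip and stays $M'$-alternating and compatible. The members that can be spoiled are exactly those traversing one of the three edges of $s$ lying in $M$, because these edges leave $M'$ and the alternation of the cycle breaks there. By Lemma \ref{compatible} such a cycle meets $s$ only in edges of $M$, and since $H$ has no inner vertices it must enter and leave $s$ through external edges at the endpoints of the shared $M$-edge. After the flip I reroute these cycles along the three edges of $s$ that have now entered $M'$; the key claim is that this rerouting fuses the affected cycles, together with $s$, into $M'$-alternating cycles and deletes at most one cycle from the total count, which gives the bound.

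The main obstacle is precisely this last step: bounding the loss by one when more than one member of $\mathcal{A}$ passes through the (at most three) $M$-edges of $s$. This requires a careful local analysis of how these cycles attach to $s$, using the non-crossing hypothesis to control their cyclic order around the shared edges and the absence of inner vertices to force their re-routings to combine rather than proliferate; the configuration indicated in Fig. \ref{unique} is the one to treat explicitly. Once the adjacency lemma is established, connectivity of $Z(H)$ together with the attainment of both $af(H)$ and $Af(H)$ forces $af(H,\cdot)$ to realize every intermediate integer, so $\mathrm{Spec}_{af}(H)=[af(H),Af(H)]$ is an integer interval and hence continuous.
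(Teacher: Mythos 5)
Your reduction to a ``discrete intermediate value theorem'' on the $Z$-transformation graph is appealing, and the scaffolding around it is sound: $Z(H)$ is indeed connected for hexagonal systems, and both endpoints $af(H)$ and $Af(H)=Fries(H)$ are attained. But the crux on which everything rests --- the adjacency lemma $|af(H,M)-af(H,M\triangle s)|\le 1$ for an $M$-alternating hexagon $s$ --- is false, so the step you yourself flag as ``the main obstacle'' is not merely unproved but unprovable. Take $H$ to be triphenylene: a central hexagon $s$ with three hexagons $t_1,t_2,t_3$ attached at the three pairwise non-adjacent edges $e_1,e_2,e_3$ of $s$. Let $M$ be the Fries structure in which all four hexagons are $M$-alternating and $e_1,e_2,e_3\in M$; then $\{s,t_1,t_2,t_3\}$ is a compatible $M$-alternating set, so $af(H,M)=c'(M)=4=Fries(H)$. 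Now flip the central hexagon and set $M'=M\triangle s$. Each $t_i$ loses its shared $M$-edge and is no longer alternating, so $s$ is the only $M'$-alternating hexagon; since every $M'$-alternating cycle must contain an $M'$-alternating hexagon in its interior, the $M'$-alternating cycles are exactly the eight boundaries of the hexagon-unions containing $s$, and one checks that the two edges $e_1$ and $a_1$ (with $a_1$ a boundary edge of $t_1$ adjacent to $e_1$) meet all of them, while $\{s,\partial H\}$ is a compatible $M'$-alternating pair. Hence $af(H,M')=c'(M')=2$, and a single hexagon flip drops the anti-forcing number by $2$. Your rerouting argument breaks precisely here: when all three $M$-edges of $s$ are shared with alternating neighbours, the flip destroys three members of $\mathcal{A}$ at once and the fused replacement cycles are pairwise incompatible (they share non-matching edges of $s$), so the loss cannot be bounded by one.

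Because the function $af(H,\cdot)$ is not $1$-Lipschitz on $Z(H)$, connectivity of $Z(H)$ gives you nothing: a walk from a minimum matching to a maximum one may skip intermediate values, and choosing a walk that does not is essentially the whole difficulty. The paper avoids this entirely by inducting on the number of hexagons: it peels off a maximal linear chain $B$ ending in a terminal hexagon, applies the induction hypothesis to the smaller system $H'$, and proves four claims ($[a'+1,Fries(H')]\subseteq \mathrm{Spec}_{af}(H)$, $af(H')\le af(H)$, $Fries(H)\le Fries(H')+2$, and realization of $Fries(H')+1$ when equality holds) that together leave no gap. If you want to salvage a flip-based argument you would have to restrict to carefully chosen flips (as the paper in effect does, flipping only along the linear chain $B'$ where Lemma \ref{compatible} does control the change in $c'$ by at most one), not arbitrary edges of $Z(H)$.
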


\begin{proof}
We proceed by induction on the number $n$ of hexagons of $H$.
If $H$ is a single hexagon, then Spec$_{af}(H)=\{1\}$.
Suppose $n\geq 2$.
Take a maximal linear chain $B$ in $H$ such that one end hexagon of $B$ is a terminal hexagon of $H$.
Let $h_0,h_1,\ldots,h_r$ ($r\geq1$) be hexagons of $B$ in turn, and $h_r$ be terminal (see Fig. \ref{unique}).

If $h_0$ is also a terminal hexagon of $H$,
then $H=B$ is a linear chain with $n>1$ hexagons.
We can  check that Spec$_{af}(H)=[1,2]$.

If $h_0$ is not terminal,
then $h_0$ is a kink or branched hexagon of $H$.
Let $B^{\prime}$ be the linear chain obtained from $B$
by removing hexagon $h_0$
and $H^\prime$  the cata-condensed hexagonal system
obtained from $H$ by removing the hexagons of $B^\prime$ (see Fig. \ref{unique}).
Then $H^{\prime}$ has less than $n$ hexagons.
By the induction hypothesis,
the anti-forcing spectrum of $H^{\prime}$ is continuous.
By Theorem \ref{maxantiforcing}, $Af(H^{\prime})=Fries(H^{\prime})$.
Let $af(H^{\prime})=a^{\prime}$.
Then Spec$_{af}(H^{\prime})=[a^{\prime},Fries(H^{\prime})]$.

\noindent\textbf{Claim 1.} $[a^{\prime}+1,Fries(H^{\prime})]\subseteq$Spec$_{af}(H)$.

Since $h_0$ is not terminal, $H^\prime$ has at least two hexagons.
By Lemma \ref{big}, $a^{\prime}+1\leq Fries(H^{\prime})$.
For any $i\in [a^{\prime}+1,Fries(H^{\prime})]$, we want to prove $i\in$Spec$_{af}(H)$.
Since  $i-1\in [a^{\prime},Fries(H^{\prime})-1]$,
by the induction hypothesis,
there is a perfect matching $M^{\prime}$
of $H^{\prime}$ such that $af(H^{\prime},M^{\prime})=|\mathcal{A}^{\prime}|=i-1\geq1$,
where $\mathcal{A}^{\prime}$ is a maximum non-crossing compatible $M^{\prime}$-alternating set of $H^{\prime}$ with smallest $h$-index.
Note that $M=$$M^{\prime}$$\cup $$\{f_1,$$f_2,$$\ldots,$$f_r,$$g_1,$$g_2,$$\ldots,$
$g_r\}$
is a perfect matching of $H$ and $d_i\notin M$ for each
$1\leq i\leq r$.
By Lemma \ref{unique1}, either $e_4\in M^{\prime}$ or $e_1\in M^{\prime}$.

If $e_1\in M^{\prime}$, then $h_1$ is $M$-alternating.
By Lemma \ref{compatible}, $h_1\in \mathcal{A}$,
where $\mathcal{A}$ is a maximum non-crossing compatible $M$-alternating set of $H$ with smallest $h$-index.
We can see that $\mathcal{A}\setminus\{h_1\}$ is a compatible $M^{\prime}$-alternating set of $H^{\prime}$,
and $|\mathcal{A}|=|\mathcal{A}\setminus\{h_1\}|+1\leq |\mathcal{A}^{\prime}|+1=i$.
On the other hand, $\mathcal{A}^{\prime}\cup\{h_1\}$ is a compatible $M$-alternating set of $H$,
so $i=|\mathcal{A}^{\prime}|+1=|\mathcal{A}^{\prime}\cup\{h_1\}|\leq|\mathcal{A}|$,
i.e., $|\mathcal{A}|=i$.
By Theorem \ref{compatibleset}, $af(H,M)=i\in$Spec$_{af}(H)$.

From now on, we suppose $e_4\in M^{\prime}$.
Then $\{e_2,e_6\}\subseteq M^{\prime}$.
So $h_0$ is $M^{\prime}$-alternating and $M$-alternating.
By Lemma \ref{compatible}, $h_0\in \mathcal{A}^{\prime}$ and $h_0\in \mathcal{A}$.
See Fig. \ref{unique},
$H-e_2-e_4-e_6$ consists of three disjoint sub-catacondensed
hexagonal systems:
$H_1$, $H_2$ and $B^\prime$ (the former two may be single edges).
Note that there is a possible non-hexagon cycle $Q$ in $\mathcal{A}$ which containing  $h_0$.

If such $Q$ exists, then $Q$ must pass through $g_1$ and $f_1$ since $Q$ and $h_0$ are compatible.
So $\mathcal{A}\setminus\{Q\}$ is a compatible $M^{\prime}$-alternating set of $H^{\prime}$,
we have $|\mathcal{A}\setminus\{Q\}|\leq |\mathcal{A}^{\prime}|=i-1$.
If $|\mathcal{A}^{\prime}|=|\mathcal{A}\setminus\{Q\}|$,
then  $af(H,M)=|\mathcal{A}|=i\in$Spec$_{af}(H)$.
If  $|\mathcal{A}^{\prime}|>|\mathcal{A}\setminus\{Q\}|$, then $|\mathcal{A}^{\prime}|\geq|\mathcal{A}|$.
On the other hand, $\mathcal{A}^{\prime}$ is also a compatible $M$-alternating set, so  $|\mathcal{A}^{\prime}|\leq|\mathcal{A}|$.
We have $|\mathcal{A}|=|\mathcal{A}^{\prime}|=i-1$.
Note that $Q$ does not pass through $e_3$ and $e_5$.
Let $M_j=M\triangle h_0\triangle h_1\triangle\ldots\triangle h_j$
$(j=0,1,\ldots,r)$,
$Q_1=(E(Q)\cap E(H_1))\cup\{e_5\}$ and $Q_2=(E(Q)\cap E(H_2))\cup\{e_3\}$.
Then $Q_1$ and $Q_2$ both are $M_0$-alternating cycles.
We can see that $(\mathcal{A}\setminus\{Q\})\cup\{Q_1,Q_2,h_1\}$ is a compatible $M_0$-alternating set with cardinality $i+1$,
so $c^{\prime}(M_0)\geq i+1$.
On the other hand, $c^{\prime}(M_0)$ is at most $|\mathcal{A}|+2=i+1$.
So $c^{\prime}(M_0)=i+1$.
Note that $h_r$ is the unique $M_r$-alternating hexagon contained in $B$,
by Theorem \ref{compatibleset} and Lemma \ref{compatible}, we have $af(H,M_r)=c^{\prime}(M_r)=c^{\prime}(M_0)-1=i\in$Spec$_{af}(H)$.

If such cycle $Q$ does not exist,
then there is no cycle in $\mathcal{A}$ which passing through
the edges going out of $h_0$ since $h_0\in \mathcal{A}$.
So $\mathcal{A}$ is also a maximum compatible
$M^{\prime}$-alternating set on $H^{\prime}$,
and $|\mathcal{A}|=|\mathcal{A}^{\prime}|=i-1$.
Note that  any cycle of $\mathcal{A}\setminus\{h_0\}$ is completely contained  in $H_1$ or $H_2$.
Let $i_1$ (resp. $i_2$) be the number of cycles of $\mathcal{A}$ which are completely contained in $H_1$ (resp. $H_2$).
Then $|\mathcal{A}|=i_1+i_2+1=i-1$.
Since $M$ and $M_0$ only differ on $h_0$ and $e_5\in M_0$ (resp. $e_3\in M_0$),
the size of maximum compatible
$M_0$-alternating set on $H_1$ (resp. $H_2$)
is $i_1$ or $i_1+1$ (resp. $i_2$ or $i_2+1$).
Let $\mathcal{A}_0$ be a maximum compatible non-crossing $M_0$-alternating set of $H$ with minimal $h$-index.
Note that $h_0$ and $h_1$ both are $M_0$-alternating.
By Lemma \ref{compatible}, $h_0\in \mathcal{A}_0$ and $h_1\in \mathcal{A}_0$.
It implies that cycles in $\mathcal{A}_0\setminus\{h_0,h_1\}$
are completely contained in $H_1$ or $H_2$.
Hence $i=i_1+i_2+2\leq |\mathcal{A}_0|\leq i_1+i_2+4=i+2$.
If $|\mathcal{A}_0|=i$, then $af(H,M_0)$$=i\in$Spec$_{af}(H)$.
If $|\mathcal{A}_0|$$=i+1$,
then $(\mathcal{A}_0\setminus\{h_0,h_1\})\cup\{h_r\}$ is a maximum compatible $M_r$-alternating set with size $i$ since $M_0$ and $M_r$ only differ on $B^\prime$.
So $af(H,M_r)=i\in$Spec$_{af}(H)$.

If $|\mathcal{A}_0|=i+2$,
then $H_1$ (resp. $H_2$) contains exactly $i_1+1$ (resp. $i_2+1$) cycles of $\mathcal{A}_0$.
Let $M_0^{\prime}$ (resp. $M_0^{\prime\prime}$) be the restriction of $M_0$ to $H_1$ (resp. $H_2$).
Then $af(H_1,M_0^{\prime})=i_1+1$ (resp. $af(H_2,M_0^{\prime\prime})=i_2+1$).
Hence $af(H_1)\leq i_1+1$ (resp. $af(H_2)\leq i_2+1$).
If $af(H_1)<i_1+1$ (resp. $af(H_2)<i_2+1$),  then $i_1\in$Spec$_{af}(H_1)$ (resp. $i_2\in$Spec$_{af}(H_2)$) by the induction hypothesis.
So there is a perfect matching
$F_1$ (resp. $F_2$) of $H_1$ (resp. $H_2$) such that $c^{\prime}(F_1)=i_1$ (resp. $c^{\prime}(F_2)=i_2$).
We can see that $M_1^{*}$$=F_1$$\cup (M_r\cap (E(H_2)\cup E(B^{\prime})))$ (resp. $M_2^{*}$$=F_2$$\cup$$(M_r$$\cap$$(E(H_1)$$\cup$$E(B^{\prime})))$)
is a perfect matching of $H$ such that $af(H,M_1^{*})=c^{\prime}(M_1^{*})=i_1+i_2+1+1=i\in$Spec$_{af}(H)$
(resp. $af(H,M_2^{*})$$=$$c^{\prime}(M_2^{*})$$=i_2+i_1+1+1=$$i\in$Spec$_{af}(H)$).

Now suppose $af(H_1)=i_1+1$ and $af(H_2)=i_2+1$,
$F^{\prime}$ is a perfect matching of $H^{\prime}$ with $af(H^{\prime},F^{\prime})=a^{\prime}$.
Note that $F=F^{\prime}\cup \{f_1,f_2,\ldots,f_r,g_1,g_2,\ldots,g_r\}$ is a perfect matching of $H$.
By Lemma \ref{compatible},
either $e_1\in F^{\prime}$ or $e_4\in F^{\prime}$.
We assert that $e_4\in F^{\prime}$.
Otherwise $e_1\in F^{\prime}$,
then the restrictions $F^{\prime}_1$ and $F^{\prime}_2$ of $F^{\prime}$ to $H_1$ and $H_2$ are perfect matchings of $H_1$ and $H_2$ respectively.
So $a^{\prime}\geq c^\prime(F^{\prime}_1)+c^\prime(F^{\prime}_2)\geq af(H_1)+af(H_2)=i_1+1+i_2+1=i\in[a^{\prime}+1,Fries(H^{\prime})]$,
a contradiction.
Since $e_4\in F^{\prime}$, $\{e_2,e_6\}\subseteq F^{\prime}$.
So $h_0$ is $F^{\prime}$-alternating,
and $F^{\prime}\triangle h_0$ is a perfect matching of $H^{\prime}$.
Since the restrictions of $F^{\prime} \triangle h_0$ to $H_1$ and $H_2$ are perfect matchings of $H_1$ and $H_2$ respectively,
$c^{\prime}(F^{\prime} \triangle h_0)\geq$$ af(H_1)+af(H_2)+1=$
$i+1\geq a^\prime +2$.
On the other hand,
by Lemma \ref{compatible},
we have that $c^{\prime}(F^{\prime} \triangle h_0)\leq a^{\prime}+2$.
So $c^{\prime}(F^{\prime} \triangle h_0)=a^{\prime}+2$
and $a^{\prime}+1=i$.
Let $\mathcal{A}^{*}$ be a maximum non-crossing compatible $F^{\prime} \triangle h_0$-alternating set of $H^\prime$ with minimal $h$-index.
Then $|\mathcal{A}^{*}|$$=c^{\prime}$$(F^{\prime}$$ \triangle$$ h_0$$)=$$a^{\prime}+2$.
Note that $c^{\prime}$$(F^{\prime})$$=$$a^{\prime}$, but $c^{\prime}$$(F^{\prime}$$\triangle$$ h_0$$)=$$a^{\prime}+2$,
it implies that there must be an $F^{\prime} \triangle h_0$-alternating cycle $C_1$
(resp. $C_2$) of $\mathcal{A}^{*}$ passing through $e_5$ (resp. $e_3$) and contained  in $H_1$ (resp. $H_2$).
We can see that $D=(C_1-e_5)\cup(C_2-e_3)\cup\{e_2,e_4,e_6,g_1,f_1,g_1^{\prime},f_1^{\prime},d_1\}$ is an $F$-alternating cycle containing $h_0$,
and $D$ is compatible with each cycle of $\mathcal{A}^{*}\setminus\{C_1,C_2\}$.
So $\{D\}\cup (\mathcal{A}^{*}\setminus\{C_1,C_2\})$ is a compatible $F$-alternating set
with size $i$.
By theorem \ref{compatibleset},
$af(H,F)\geq i$.
On the other hand,
by Theorem \ref{compatibleset} and Lemma \ref{compatible},
we have $i=a^\prime+1=c^\prime(F^\prime)+1\geq c^\prime(F)=af(H,F)$.
Therefore,
$af(H,F)=i\in$Spec$_{af}(H)$.

By the arbitrariness of $i$, we proved that  $[a^{\prime}+1,Fries(H^{\prime})]\subseteq$Spec$_{af}(H)$.

\noindent\textbf{Claim 2.} $a^{\prime}\leq af(H)$.

Let $af(H)=a$,
$M$ be a perfect matching of $H$ with $af(H,M)=a$.
By Lemma \ref{unique1}, just one edge of $\{e_4,e_1,d_1,d_2,\ldots,d_r\}$ belongs to $M$.
If $e_4\in M$ or $e_1\in M$,
then the restriction $M^{\prime}$ of $M$ to $H^{\prime}$ is a perfect matching of $H^{\prime}$.
So $a^\prime\leq c^\prime(M^\prime)\leq c^\prime(M)=a$.
If $d_i\in M$ $(1\leq i\leq r)$, then $h_i$ is $M$-alternating.
Let $M_i$$=$$M$$\triangle$$h_i$$\triangle$$h_{i-1}$$\triangle$$\cdots$$\triangle$
$h_1$.
Then $M_i$ is a perfect matching of $H$ and $e_1\in M_i$.
Note that $M$ and $M_i$  only differ  on $B^\prime$.
By Lemma \ref{compatible}, $c^\prime(M_i)\leq c^\prime(M)+1=a+1$.
Since $e_1\in M_i$,
the restriction $M_i^{\prime}$ of $M_i$ to $H^{\prime}$ is a perfect matching of $H^{\prime}$.
So $a^\prime\leq c^\prime(M_i^{\prime})$.
Let $\mathcal{A}_i^{\prime}$  be a maximum non-crossing compatible $M_i^{\prime}$-alternating set
with minimal $h$-indices in $H^{\prime}$.
Then $\mathcal{A}_i^{\prime}\cup \{h_1\}$ is a compatible $M_i$-alternating set of $H$ since $h_1$ is $M_i$-alternating.
Hence $|\mathcal{A}_i^{\prime}\cup \{h_1\}|\leq c^\prime(M_i)$.
Further, $a^{\prime}+1\leq c^\prime(M_i^{\prime})+1=|\mathcal{A}_i^{\prime}|+1\leq c^\prime(M_i)\leq a+1$,
i.e., $a^{\prime}\leq a$.

\noindent\textbf{Claim 3.} $Fries(H)\leq Fries(H^{\prime})+2$.

Let $M$ be a perfect matching of $H$
and $h^\prime(M)$ denote the the number of $M$-alternating hexagons of $H$.
Suppose $h^\prime(M)=Fries(H)$.
By Lemma \ref{unique1},
only one of $\{e_4,e_1,d_1,d_2,\ldots,d_r\}$ belongs to $M$.
If $e_1\in M$ or $e_4\in M$,
then the restriction $M^\prime$ of $M$ to $H^\prime$
is a perfect matching of $H^\prime$.
Note that $B^\prime$ contains at most one $M$-alternating hexagon.
So $h^\prime(M)\leq h^\prime(M^\prime)+1\leq Fries(H^\prime)+1$.
If $d_i\in M$ ($1\leq i\leq r$),
then $h_i$ is $M$-alternating.
Let $M_i$ $=$$M$$\triangle$$h_i$$\triangle$$h_{i-1}$$\triangle$$\cdots$$\triangle$
$h_1$.
Then $M_i$ is a perfect matching of $H$ such that the restriction $M_i^\prime$ of $M_i$ to  $H^\prime$ is a perfect matching of $H^\prime$.
Since $h_1$ is the unique $M_i$-alternating hexagon contained in $B^\prime$,
$h^\prime(M_i)=h^\prime(M_i^\prime)+1$.
Note that $h^\prime(M_i)=h^\prime(M)$ or $h^\prime(M_i)=h^\prime(M)-1$
since $B$ is a linear chain.
If $h^\prime(M_i)=h^\prime(M)$,
then $h^\prime(M)=h^\prime(M_i^\prime)+1\leq Fries(H^\prime)+1$.
If $h^\prime(M_i)=h^\prime(M)-1$,
then $h^\prime(M)=h^\prime(M_i^\prime)+2\leq Fries(H^\prime)+2$.

\noindent\textbf{Claim 4.} If $Fries(H)=Fries(H^{\prime})+2$, then $Fries(H^{\prime})+1\in$Spec$_{af}(H)$.

The proof of Claim 3 implies that
if $Fries(H)=Fries(H^{\prime})+2$,
then there is a perfect matching $M$ of $H$
with $h^\prime(M)=Fries(H)$ such that just two adjacent $M$-alternating
hexagons $h_{i}$ and $h_{i+1}$ ($1\leq i<r$) contained in $B^\prime$.
Let $M^{*}=M\triangle h_{i+1}\triangle h_{i+2}\triangle\cdots\triangle h_r$.
Then $M^{*}$ is a perfect matching of $H$ and
$h_r$ is the unique $M^*$-alternating hexagon contained in $B^\prime$.
By Lemma \ref{compatible},
$c^{\prime}(M^{*})=h^{\prime}(M)-1=Fries(H^{\prime})+1$.
By Theorem \ref{compatibleset},
$af(H,M^{*})=Fries(H^{\prime})+1\in$Spec$_{af}(H)$.

We can see that Claims 1, 2, 3 and 4 imply that there is no gap in the anti-forcing spectrum of $H$.
\end{proof}

According to Theorems \ref{maxantiforcing} and \ref{continuous}, the following corollary is immediate.
\begin{corollary}{\bf .}
Let $H$ be a cata-condensed hexagonal system. Then Spec$_{af}(H)=[af(H),Fries(H)]$.
\end{corollary}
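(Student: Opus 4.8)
The plan is to prove the statement by induction on the number $n$ of hexagons of $H$, the goal being to show Spec$_{af}(H)=[af(H),Fries(H)]$ has no gaps. The base case $n=1$ is a single hexagon with Spec$_{af}(H)=\{1\}$. For the inductive step I would first dispose of the case in which $H$ is itself a linear chain, where a direct computation gives Spec$_{af}(H)=[1,2]$. Otherwise I would select a maximal linear chain $h_0,h_1,\ldots,h_r$ one of whose ends $h_r$ is a terminal hexagon of $H$ and whose other end $h_0$ is a kink or branched hexagon, delete the sub-chain $B'=h_1\cdots h_r$, and obtain a smaller cata-condensed system $H'$. By the induction hypothesis and Theorem \ref{maxantiforcing}, writing $a'=af(H')$, we have Spec$_{af}(H')=[a',Fries(H')]$, and by Lemma \ref{big} this interval is nonempty with $a'+1\le Fries(H')$.

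The core of the argument is to transfer this interval to $H$ through four claims, each proved by converting anti-forcing numbers into cardinalities of maximum non-crossing compatible alternating sets via Theorem \ref{compatibleset} and controlling the structure of such sets by Lemma \ref{compatible}. Claim 1 asserts $[a'+1,Fries(H')]\subseteq$ Spec$_{af}(H)$: for each target $i$ in this range one takes a perfect matching $M'$ of $H'$ with $af(H',M')=i-1$ and extends it across the chain to a perfect matching $M$ of $H$, using Lemma \ref{unique1} to decide which of the two admissible edges at $h_0$ lies in $M'$. Claim 2 asserts $a'\le af(H)$, comparing a minimum anti-forcing matching of $H$ with its restriction to $H'$ (reducing to the case $e_1\in M$ by symmetric differences $M\triangle h_i\triangle\cdots\triangle h_1$ along the linear chain). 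Claim 3 bounds $Fries(H)\le Fries(H')+2$, since after the same symmetric differences the chain $B$ contributes at most one extra alternating hexagon. Claim 4 fills the single possibly-missing value: if $Fries(H)=Fries(H')+2$ then $Fries(H')+1\in$ Spec$_{af}(H)$.

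Granting the four claims, continuity follows by assembling the cases $Fries(H)\in\{Fries(H'),Fries(H')+1,Fries(H')+2\}$. In each case Claim 1 covers $[a'+1,Fries(H')]$, the maximum $Fries(H)=Af(H)$ lies in the spectrum by Theorem \ref{maxantiforcing}, the only possible gap strictly below the maximum is filled by Claim 4, the minimum $af(H)$ is realized by definition, and Claim 2 guarantees the spectrum extends no lower than $a'$, so the value $a'$ (if it equals $af(H)$) is already accounted for. Hence $[af(H),Fries(H)]$ is covered without gaps.

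The main obstacle is Claim 1, specifically the extension step. Deleting the three matching edges of the alternating hexagon $h_0$ splits $H$ into $H_1$, $H_2$ and $B'$, and a maximum compatible $M$-alternating set $\mathcal{A}$ of $H$ may contain a non-hexagon cycle $Q$ enclosing $h_0$. The delicate point is to track how $|\mathcal{A}|$ compares with $|\mathcal{A}'|$: one must either split $Q$ into two cycles $Q_1,Q_2$ living in $H_1,H_2$, or conversely rebuild a long cycle $D$ through $h_0$ from cycles $C_1,C_2$ supplied by the two sides, in each case preserving compatibility and the non-crossing property and invoking Lemma \ref{compatible} to bound the cardinality change by at most two. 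Passing among the matchings $M$, $M_0$, $M_r$ by symmetric differences along the chain and verifying at every branch that the prescribed value $i$ is attained is where the heaviest bookkeeping of the whole proof concentrates; the remaining claims are comparatively routine once this machinery is in place.
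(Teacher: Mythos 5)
Your proposal is correct and follows essentially the same route as the paper: the paper obtains this corollary immediately from Theorem \ref{maxantiforcing} together with Theorem \ref{continuous}, and your induction with Claims 1--4 (the decomposition into $H_1$, $H_2$ and $B'$, the splitting of $Q$ into $Q_1,Q_2$, and the reassembly of $D$ from $C_1,C_2$) is precisely the paper's own proof of Theorem \ref{continuous}. The only difference is packaging: the paper first proves continuity as a standalone theorem and then reads off Spec$_{af}(H)=[af(H),Fries(H)]$ from $Af(H)=Fries(H)$.
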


\end{document}